\newcommand{\C}{\mathbf{C}}
\newcommand{\R}{\mathbf{R}}
\newcommand{\Z}{\mathbf{Z}}
\def\g{\gamma}
\def\d{\delta}
\def\e{\varepsilon}
\newcommand{\sys}{\mathsf{sys}}
\newcommand{\area}{\mathsf{aire}}
\newcommand{\ri}{\mathsf{R}}
\newcommand{\diff}{\mathrm{d}}
\newcommand{\mrm}{\mathrm}
\theoremstyle{plain}
\newtheorem{theorem}{Th\'eor\`eme}[section] 
\newtheorem*{theoremnonumber}{Th\'eor\`eme}
\newtheorem*{theorem1}{Th\'eor\`eme~1}
\newtheorem*{theorem2}{Th\'eor\`eme~2}
\newtheorem*{theorem4}{Th\'eor\`eme~4}
\newtheorem{proposition}[theorem]{Proposition}
\newtheorem{lemma}[theorem]{Lemme}
\newtheorem*{lemmedeschwarz}{Lemme de Schwarz}
\newtheorem{thm-defi}[theorem]{Théorème - Définition}
\newtheorem{pro-defi}[theorem]{Proposition - Définition}
\theoremstyle{definition}
\newtheorem{remark}{Remarque}[section]
\newtheorem{remarks}{Remarques}[section]
\newtheorem*{remarknonumber}{Remarque}
\title[]{Le lemme de Schwarz et la borne supérieure du rayon d'injectivité des surfaces}
\subjclass[2000]{53C20}
\keywords{Injectivity radius\and Schwarz lemma}
\date{Le \today}
\thanks{This work has been fully supported by FIRB 2010 (RBFR10GHHH$_{003}$).}
\begin{document}

\maketitle

\begin{flushleft} 
            \textbf{Matthieu Gendulphe}\\
  \begin{small}Dipartimento di Matematica Guido Castelnuovo\\
                      Sapienza università di Roma,
                      Piazzale Aldo Moro, 00185 Roma\\
                      Matthieu@Gendulphe.com\end{small}
\end{flushleft}

\renewcommand{\abstractname}{Abstract} 
\begin{abstract}
We study the injectivity radius of complete Riemannian surfaces $(S,g)$ with bounded curvature $|K(g)|\leq 1$. We show that if $S$ is orientable with nonabelian fundamental group, then there is a point $p\in S$ with injectivity radius $\ri_p(g)\geq\mrm{arcsinh}(2/\sqrt{3})$. This lower bound is sharp independently of the topology of $S$. This result was conjectured by Bavard who has already proved the  genus zero cases (\cite{bavard-these}). We establish a similar inequality for surfaces with boundary.\par
 The proofs rely on a version due to Yau (\cite{yau}) of the Schwarz lemma, and on the work of Bavard (\cite{bavard-these}). This article is the sequel of \cite{schwarz} where we studied applications of the Schwarz lemma to hyperbolic surfaces.
\end{abstract}

\section{Introduction}\label{sec:introduction} 

\subsection{\'Enoncé des principaux résultats} Nous savons, depuis les travaux de Poincaré et K\oe be sur l'uniformisation, que toute métrique riemannienne sur une surface est conforme à une métrique à courbure constante. Nous sommes alors tentés d'attribuer à ces dernières des propriétés d'extrémalité relatives à certains invariants métriques.\par
 Dans cette direction, C.~Bavard a montré (\cite{bavard-these,bavard84})~:
 
\begin{theoremnonumber}[Bavard] Si $g$ est une métrique à coubure $|K(g)|\leq 1$ sur la sphère 2-dimensionnelle (resp. sur le plan projectif réel), alors il existe un point $p$ de la sphère (resp. du plan projectif) dont le rayon d'injectivité $\ri_p(g)$ est supérieur ou égal à $\pi$ (resp. à $\pi/2$). 
\end{theoremnonumber} 

\begin{remarknonumber}
Ces bornes sont optimales, et atteintes par les métriques à courbure constante égale à $1$.
\end{remarknonumber}

Pour les surfaces ne supportant pas de métrique à courbure positive ou nulle, Bavard a conjecturé (voir \textsection~3.6 de \cite{bavard-these}) le résultat suivant que nous établissons dans cet article~:

\begin{theorem1} Soit $S$ une surface orientable sans bord dont le groupe fondamental n'est pas abélien. Si $g$ est une métrique complète à courbure $|K(g)|\leq 1$ sur $S$, alors il existe un point $p\in S$ dont le rayon d'injectivité $\ri_p(g)$ est supérieur ou égal à $\mrm{arcsinh}(2/\sqrt{3})\approx 0,97$.\par
 Cette borne est optimale quel que soit le type topologique de $S$. De plus, si la borne supérieure du rayon d'injectivité de $g$ est égale à $\mrm{arcsinh}(2/\sqrt{3})$, alors $S$ est homéomorphe à la sphère privée de trois points.
\end{theorem1} 

 Le cas des surfaces orientables de genre nul est déjà connu (\cite{bavard-these} \textsection~2 et 3). Pour les surfaces orientables de genre positif, Bavard (\cite{bavard-these} \textsection~3.6) a montré l'existence d'un point dont le rayon d'injectivité est supérieur ou égal à  $\frac{\ln 3}{2}\approx 0,55$. Le corollaire~1.11 de l'article \cite{bavard-pansu} de C.~Bavard et P.~Pansu donne une estimée légèrement moins bonne.\par

 La version hyperbolique du théorème~1 est due à A.~Yamada (\cite{yamada,schwarz})~:

\begin{theoremnonumber}[Yamada]
 Soit $(S,g_0)$ une surface hyperbolique orientable sans bord. La borne supérieure du rayon d'injectivité de $g_0$ est supérieure ou égale à $\mrm{arcsinh}(2/\sqrt{3})$, avec égalité si et seulement si $(S,g_0)$ est isométrique au pantalon hyperbolique à trois pointes. 
\end{theoremnonumber}
 
\begin{remarknonumber}
La borne de Yamada est optimale quel que soit le type topologique de la surface, mais elle n'est atteinte que pour la sphère privée de trois points.
\end{remarknonumber}
 
 Nous nous intéressons aussi aux surfaces à bord géodésique pour lesquelles le même problème présente un intérêt. Dans le cas du disque, la question a été résolue indépendamment par Y.~Burago (\cite{burago}) et Bavard (\cite{bavard84})~:

\begin{theoremnonumber}[Burago, Bavard]
Soit $g$ une métrique sur le disque fermé telle que le bord soit géodésique. Si $g$ est à courbure $|K(g)|\leq 1$, alors il existe un point $p$ du disque dont le rayon d'injectivité $\ri_p(g)$ est supérieur ou égal à $\pi/2$. 
\end{theoremnonumber} 

\begin{remarknonumber}
Cette borne est optimale, et atteinte par une hémisphère à courbure constante égale à 1.
\end{remarknonumber}
 
 Pour les surfaces n'admettant pas de métrique à courbure positive ou nulle, nous généralisons un résultat bien connu de la géométrie hyperbolique~:
 
\begin{theorem2}
Soit $S$ une surface dont les composantes de bord éventuelles sont compactes, et dont le groupe fondamental n'est pas virtuellement abélien. Soit $g$ une métrique sur $S$ telle que $\partial S$ est géodésique. Si $g$ est à courbure bornée $|K(g)|\leq 1$, alors la surface $(S,g)$ contient $-\chi(S)$ disques métriques de rayon $\frac{\ln 3}{2}$ disjoints et homéomorphes au disque unité.
\end{theorem2}
  
\subsection{Plan}
La plupart des résultats de cet article reposent sur le lemme de Schwarz. Dans le \textsection~\ref{sec:minoration} nous rappelons la version de S.T.~Yau (\cite{yau}) du lemme de Schwarz, puis nous prouvons le théorème~1. Sa démonstration fait appel à de nombreux résultats provenant de la thèse de Bavard (\cite{bavard-these}). Dans le \textsection~\ref{sec:bord} nous démontrons le théorème~2, ceci nécessite un lemme d'approximation très utile (lemme~\ref{lem:technique}), qui permet de se ramener au cas sans bord.\par
 Les \textsection~\ref{sec:majoration} et \ref{sec:applications} sont de nature un peu différente. Dans le \textsection~\ref{sec:majoration} nous majorons la borne supérieure du rayon d'injectivité des surfaces à courbure $0\geq K\geq -1$. L'idée consiste à adapter des arguments dus à M.~Katz et S.~Sabourau (\cite{sabourau}). Dans le \textsection~\ref{sec:applications} nous minorons la longueur des géodésiques fermées non simples des surfaces à courbure $K\geq -1$, nous répondons ainsi à une question de P.~Buser. 
 
\subsection{Conventions}
Les métriques considérées dans cet article sont riemanniennes. Une métrique \emph{hyperbolique} est une métrique complète à courbure constante $-1$. Une application conforme entre deux surfaces riemanniennes est une application dont la différentielle est en tout point une similitude, nous n'autorisons pas les points singuliers. Toutes les surfaces sont supposées connexes.

\subsection{Remerciements} Je remercie Christophe Bavard pour les différents échanges que nous avons eus. Ce travail a été inspiré par ses résultats non publiés (\cite{bavard-these}). Je remercie Juan Souto pour une discussion qui m'a permis d'aborder le sujet avec moins de naïveté.

\section{Minoration de la borne supérieure du rayon d'injectivité}\label{sec:minoration}

\subsection{Définitions et notations}   Rappelons que si $p$ est un point d'une surface riemannienne complète $(S,g)$, éventuellement à bord géodésique, alors le rayon d'injectivité en $p$ est égal à la plus petite des trois longueurs suivantes (lemme~5.6 de \cite{cheeger})~: la distance de $p$ au bord éventuel, la distance de $p$ à son lieu des points conjugués, la demi-longueur du plus court lacet géodésique d'origine $p$. Nous notons $\ri_p(g)$ le rayon d'injectivité de $g$ au point $p$, et $\ri$ la borne supérieure du rayon d'injectivité sur $S$.\par
  Par la suite, il sera utile de distinguer les lacets géodésiques contractiles de ceux non contractiles. Nous désignons par $\sys_p(g)$ la longueur du plus court lacet géodésique non contractile d'origine $p$. Cette quantité est souvent appelée \emph{systole de $g$ au point $p$}.\par
 
\subsection{En courbure négative ou nulle}\label{sec:schwarz}
Parmi les nombreux avatars du lemme de Schwarz, nous utiliserons l'énoncé suivant dû à S.T.~Yau (théorème~1 de \cite{yau})~:
 
\begin{lemmedeschwarz}
Soit $(S,g_0)$ une surface riemannienne sans bord à courbure négative pincée $0>\sup_S K(g_0)\geq \inf_S K(g_0)\geq -1$. Soit $g$ une métrique complète sur $S$ dans la même classe conforme que $g_0$. Si $\inf_S K(g)\geq \sup_S K(g_0)$ alors $g\geq g_0$. 
\end{lemmedeschwarz}

\begin{remarks}
\begin{enumerate}
\item Nous ne supposons pas la surface d'aire finie.
\item M.~Troyanov a obtenu un énoncé plus général (voir \cite{troyanov}).
\item Ce \guillemotleft~lemme~\guillemotright\  produit une inégalité sur les distances qui va dans le même sens que celle sur les courbures, à l'inverse du théorème de comparaison de Rauch.
\item Par le même théorème de Yau (théorème~1 de \cite{yau}), la courbure de $g$ prend nécessairement une valeur négative, soit $\inf_S K(g)<0$.
\item Lorsque l'inégalité est stricte $\inf_S K(g)> \sup_S K(g_0)$, le théorème donne
$$g\geq \frac{\sup_S K(g_0)}{\inf_S K(g)} g_0.$$
\end{enumerate}
 \end{remarks}

 Sous les hypothèses du lemme, nous avons $I(g)\geq I(g_0)$ pour tout invariant métrique \emph{croissant} $I$. Ainsi, toute métrique hyperbolique réalise le minimum de $I$ parmi les métriques complètes à courbure $K\geq -1$ dans sa classe conforme. Comme invariants métriques monotones, nous pouvons citer l'aire, l'entropie, la systole, le diamètre, le rayon de recouvrement, le spectre marqué des longueurs.\par
  Le rayon d'injectivité n'est pas un invariant métrique monotone, puisqu'il tient compte des points conjugués, et des lacets géodésiques contractiles. Cependant, si la courbure est négative ou nulle, nous avons $\ri_p=\sys_p/2$ en tout point $p$ de $S$. Comme $\sys_p$ est un invariant métrique croissant, le théorème de Yamada (\textsection~\ref{sec:introduction}) et le lemme de Schwarz donnent immédiatement~:
 
\begin{proposition}
Soit $S$ une surface orientable sans bord non homéomorphe à la sphère, au disque, au cylindre ou au tore. Soit $g$ est une métrique complète sur $S$.
\begin{enumerate}[i)]
\item Si $0\geq K(g)\geq -1$, alors $\ri(g)\geq \mathrm{arcsinh}(2/\sqrt{3})$. 
\item Si $K(g)\geq -1$, alors il existe un point $p\in S$ tel que $\sys_p(g)\geq 2  \mathrm{arcsinh}(2/\sqrt{3})$.
\end{enumerate}
Ces bornes sont optimales quelle que soit la surface $S$ satisfaisant les hypothèses. De plus, le cas d'égalité n'est réalisé que par des métriques sur la sphère privées de trois points. 
\end{proposition}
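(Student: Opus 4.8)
L'idée est d'uniformiser puis de combiner le lemme de Schwarz de Yau avec le th\'eor\`eme de Yamada, exactement comme annonc\'e juste avant l'\'enonc\'e. Comme $S$ est orientable, sans bord, et non hom\'eomorphe \`a la sph\`ere, au disque, au cylindre ou au tore, toute structure conforme sur $S$ est hyperbolique; le th\'eor\`eme d'uniformisation fournit donc une m\'etrique hyperbolique compl\`ete $g_0$ conforme \`a $g$. Les hypoth\`eses de courbure ($0\geq K(g)\geq -1$ dans le cas i), $K(g)\geq -1$ dans le cas ii)) entra\^inent toutes deux $\inf_S K(g)\geq -1=\sup_S K(g_0)$, de sorte que le lemme de Schwarz s'applique (ses hypoth\`eses sur $g_0$ sont satisfaites puisque $K(g_0)\equiv -1$) et donne $g\geq g_0$. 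Noter que cette premi\`ere partie est identique pour i) et ii), la positivit\'e \'eventuelle de $K(g)$ n'emp\^echant pas l'application du lemme.

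De $g\geq g_0$ on tire, pour tout $p\in S$, l'in\'egalit\'e $\sys_p(g)\geq\sys_p(g_0)$: tout lacet non contractile d'origine $p$ est au moins aussi long pour $g$ que pour $g_0$, et $\sys_p$ est l'infimum de ces longueurs. En utilisant $\sys_p(g_0)=2\ri_p(g_0)$ (valable car $K(g_0)\leq 0$) on obtient $\sup_p\sys_p(g)\geq 2\ri(g_0)$, et le th\'eor\`eme de Yamada donne $\ri(g_0)\geq\mathrm{arcsinh}(2/\sqrt{3})$. Pour ii), on conclut en remarquant que le th\'eor\`eme de Yamada fournit en fait un point $p$ o\`u $\ri_p(g_0)\geq\mathrm{arcsinh}(2/\sqrt{3})$, d'o\`u $\sys_p(g)\geq\sys_p(g_0)=2\ri_p(g_0)\geq 2\,\mathrm{arcsinh}(2/\sqrt{3})$. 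Pour i), comme $K(g)\leq 0$ on a de plus $\ri_p(g)=\sys_p(g)/2$ en tout point, donc $\ri(g)=\tfrac12\sup_p\sys_p(g)\geq\ri(g_0)\geq\mathrm{arcsinh}(2/\sqrt{3})$.

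Pour le cas d'\'egalit\'e: si $\ri(g)=\mathrm{arcsinh}(2/\sqrt{3})$ (cas i)) ou $\sup_p\sys_p(g)=2\,\mathrm{arcsinh}(2/\sqrt{3})$ (cas ii)), les in\'egalit\'es pr\'ec\'edentes forcent $\ri(g_0)=\mathrm{arcsinh}(2/\sqrt{3})$, et la partie rigidit\'e du th\'eor\`eme de Yamada assure que $(S,g_0)$ est le pantalon hyperbolique \`a trois pointes, donc que $S$ est hom\'eomorphe \`a la sph\`ere priv\'ee de trois points. L'optimalit\'e pour un $S$ admissible donn\'e d\'ecoule de l'optimalit\'e de la borne de Yamada pour le type topologique de $S$: les m\'etriques hyperboliques sont \`a courbure $-1$, donc v\'erifient $0\geq K\geq -1$, et il existe sur $S$ des m\'etriques hyperboliques dont le rayon d'injectivit\'e (de fa\c con \'equivalente $\tfrac12\sup_p\sys_p$) est arbitrairement proche de $\mathrm{arcsinh}(2/\sqrt{3})$; sur la sph\`ere priv\'ee de trois points, la m\'etrique hyperbolique elle-m\^eme r\'ealise l'\'egalit\'e.

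Il n'y a pas d'obstacle v\'eritable, l'\'enonc\'e \'etant une transcription directe du th\'eor\`eme de Yamada via la monotonie $g\geq g_0$. Les seuls points demandant un peu de soin sont la v\'erification que la classe conforme de $g$ est hyperbolique pour tout $S$ admissible (remarque de nature purement topologique, les surfaces exclues \'etant exactement celles portant une structure conforme non hyperbolique), et, dans le cas ii), l'extraction d'un vrai point plut\^ot que d'une simple borne sup\'erieure, ce qui est d\'ej\`a contenu dans le th\'eor\`eme de Yamada.
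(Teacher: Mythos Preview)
Your proof is correct and follows exactly the approach indicated in the paper: the proposition is stated as an immediate consequence of Yamada's theorem combined with the Schwarz lemma, using that $\sys_p$ is a monotone invariant and that $\ri_p=\sys_p/2$ in nonpositive curvature. Your write-up simply makes explicit the details the paper leaves to the reader.
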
 

\begin{remark} On peut étendre ce théorème au disque ouvert et au cylindre infini en ajoutant l'hypothèse \emph{$g$ est conforme à une métrique hyperbolique}. Le cylindre a une classe conforme euclidienne (celle de $\C^\ast$), toutes ses autres classes conformes (celles des couronnes et du disque épointé) sont hyperboliques. Le disque a une classe conforme euclidienne (celle du plan euclidien), et une classe conforme hyperbolique (celle du disque de Poincaré).\end{remark}

 L'idée de contrôler les invariants métriques monotones par le lemme de Schwarz n'est pas nouvelle. Mentionnons que P.~Su\'arez-Serrato et S.~Tapie (\cite{tapie}) s'en servent pour établir la rigidité entropique des métriques de Yamabe parmi les métriques conformes à courbure sectionnelle négative.\par 

\subsection{En courbure bornée}\label{sec:courbure-bornee}
Nous étendons l'item i) de la proposition ci-dessus aux surfaces riemanniennes à courbure bornée $|K|\leq 1$.

\begin{theorem1}
Soit $S$ une surface orientable sans bord non homéomorphe à la sphère, au disque, au cylindre ou au tore. Si $g$ est une métrique complète sur $S$ à courbure bornée $|K(g)|\leq 1$, alors $\ri(g)\geq\mrm{arcsinh}(2/\sqrt{3})$.\par
 Cette borne est optimale quelle que soit la surface $S$ satisfaisant les hypothèses. De plus, le cas d'égalité n'est réalisé que par des métriques sur la sphère privées de trois points.
\end{theorem1}

\begin{remark}
De manière équivalente, on pourrait supposer \emph{$S$ orientable sans bord avec un groupe fondamental non abélien}.
\end{remark}

 Le théorème~1 se déduit directement du théorème de Bavard énoncé ci-dessous. Selon ce théorème, lorsque la borne supérieure $\ri(g)$ du rayon d'injectivité est petite, le rayon d'injectivité $\ri_p(g)$ est égal à la demi-systole $\sys_p(g)/2$ en tout point $p$ de $S$. Nous sommes ainsi ramenés à un invariant métrique croissant, et nous concluons facilement grâce au lemme de Schwarz et au théorème de Yamada.

\begin{theoremnonumber}[Bavard]
Soit $(S,g)$ une surface riemannienne complète, à courbure majorée $K(g)\leq 1$, et éventuellement à bord géodésique. Si $\ri(g)< \pi/2$, alors il n'existe pas de lacet géodésique simple contractile de longueur inférieure à $2\pi$, et en tout point $p\in S$ le rayon d'injectivité est donné par 
$$\ri_p(g)=\min\left(\frac{\sys_p(g)}{2},d_g(p,\partial S)\right).$$
\end{theoremnonumber}

\begin{remarks}
\begin{enumerate}
\item Le résultat original (proposition~3.2 de \cite{bavard-these}) est plus général. 
\item Ce théorème interviendra de manière fondamentale dans les preuves des théorèmes~2 et 3.
\end{enumerate}
\end{remarks}

Nous indiquons ci-dessous une preuve utilisant les résultats et les idées des articles \cite{bavard84,bavard-pansu} de Bavard et Bavard-Pansu.

\begin{proof} Supposons qu'il existe un lacet géodésique simple $\g$, contractile et de longueur inférieure à $2\pi$. Nous lui appliquons le procédé de raccourcissement de Birkhoff (voir \textsection~1 de\cite{bavard-pansu}), trois situations sont possibles~:
\begin{itemize}
\item la suite de courbes converge vers un point,
\item la suite de courbes tend vers l'infini,
\item la suite de courbes converge vers une géodésique fermée simple.
\end{itemize}
La première situation contredit le lemme d'homotopie de Klingenberg (voir le lemme~4.3 de \cite{abresch}). Dans la deuxième situation, le lacet $\g$ borde d'un côté un disque, et de l'autre un anneau infini. Ainsi $S$ est homéomorphe au plan et, par le théorème principal de \cite{bavard84}, nous avons $\ri(g)\geq \pi$. Dans la troisième situation, nous obtenons une géodésique fermée simple bordant un disque. Selon le corollaire~3 de \cite{bavard84}, il existe un point de ce disque en lequel le rayon d'injectivité est supérieur ou égal à $\pi/2$. Les deux dernières situations contredisent l'hypothèse $\ri(g)<\pi/2$. Ceci prouve la première assertion.\par
 Puisque $K(g)\leq 1$, la distance entre deux points conjugués le long d'une géodésique de $(S,g)$ vaut au moins $\pi$ (théorème de comparaison de Rauch), d'où la deuxième assertion. Le théorème de comparaison de Rauch est en général énoncé pour les surfaces sans bord, il s'étend aux surfaces à bord géodésique grâce au lemme~\ref{lem:technique}.
\end{proof}

\section{Minoration pour les surfaces à bord géodésique}\label{sec:bord}
La présence d'un bord modifie le comportement du rayon d'injectivité, puisque celui-ci tient compte de la distance au bord. Néanmoins nous obtenons un résultat similaire au théorème~1 dans le cas des surfaces à bord géodésique~:

\begin{theorem2}
Soit $S$ une surface dont les composantes de bord éventuelles sont compactes, et dont le groupe fondamental n'est pas virtuellement abélien. Soit $g$ une métrique complète sur $S$ telle que $\partial S$ est géodésique. Si $g$ est à courbure bornée $|K(g)|\leq 1$, alors $S$ contient $-\chi(S)$ points $p_1,\ldots,p_{-\chi(S)}$ tels que
\begin{enumerate}[i)]
\item en chaque $p_i$ le rayon d'injectivité est minoré $\ri_{p_i}(g)\geq\frac{\ln 3}{2}$,
\item les disques de rayon $\frac{\ln 3}{2}$ centrés aux $p_i$ sont disjoints.
\end{enumerate}
\end{theorem2}

\begin{remarks} Nous ne supposons pas la surface orientable, ni la métrique d'aire finie.\end{remarks}

\begin{proof}
Nous décomposons la surface $(S,g)$ en pantalons à bord géodésique (proposition~\ref{pro:pantalons}). Selon le lemme~\ref{lem:pantalons}, chacun de ces pantalons possède un point dont le rayon d'injectivité est supérieur ou égal à  $\frac{\ln 3}{2}$.
\end{proof}

\subsection{Le rayon d'injectivité des pantalons}
Nous appelons \emph{pantalon} la somme connexe de trois disques ouverts ou fermés. Il y a quatre types topologiques de pantalons, classés suivant leur nombre de composantes de bord.\par
 Commençons par rappeler un résultat bien connu~:

\begin{lemma}\label{lem:pantalonhyp}
Si $X$ est une surface hyperbolique, éventuellement à bord géodésique, alors $X$ contient  $-2\chi(X)$ disques de rayon $\frac{\ln 3}{2}$ disjoints et homéomorphes au disque unité.  
\end{lemma}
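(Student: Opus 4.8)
On peut supposer $\chi(X)<0$, faute de quoi l'\'enonc\'e est vide. L'id\'ee est de paver $X$ par des hexagones rectangles et d'inscrire dans chacun un disque de rayon $\frac{\ln 3}{2}$. On part d'une d\'ecomposition en pantalons de $X$ par des g\'eod\'esiques ferm\'ees simples disjointes~: elle coupe $X$ en $-\chi(X)$ pantalons hyperboliques \`a bord g\'eod\'esique, certaines composantes de bord pouvant \^etre des pointes. Chaque pantalon $P$ se d\'ecoupe ensuite le long de ses trois coutures --- les perpendiculaires communes \`a ses composantes de bord, ou les g\'eod\'esiques joignant ses pointes --- en deux hexagones rectangles, avec la convention qu'une pointe fournit un sommet id\'eal~; le terme \emph{hexagone rectangle} d\'esigne ici un hexagone rectangle ayant $0$, $1$, $2$ ou $3$ sommets id\'eaux, le cas le plus d\'eg\'en\'er\'e \'etant le triangle id\'eal. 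Au total, $X$ est pav\'ee par $N:=-2\chi(X)$ tels hexagones $H_1,\ldots,H_N$ d'int\'erieurs deux \`a deux disjoints, chaque $\mrm{int}(H_k)$ se plongeant isom\'etriquement dans $X$ comme partie convexe du plan hyperbolique. (Lorsque $X$ n'est pas orientable, on proc\`ede de m\^eme en descendant la construction depuis le rev\^etement d'orientation, o\`u tout est automatiquement \'equivariant puisque le centre d'un cercle inscrit est canonique.)

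Le point clef est que \emph{tout hexagone rectangle $H$ (avec $0$, $1$, $2$ ou $3$ sommets id\'eaux) contient un disque de rayon $\frac{\ln 3}{2}$, le rayon de son cercle inscrit valant $\frac{\ln 3}{2}$ lorsque $H$ est un triangle id\'eal et lui \'etant strictement sup\'erieur sinon.} En l'admettant, notons $z_k$ le centre du cercle inscrit de $H_k$ et $D_k$ le disque m\'etrique ouvert de rayon $\frac{\ln 3}{2}$ centr\'e en $z_k$. Comme $d(z_k,\partial H_k)\geq\frac{\ln 3}{2}$ et que $\mrm{int}(H_k)$ est convexe dans le plan hyperbolique, tout chemin de $X$ issu de $z_k$ qui sort de $\mrm{int}(H_k)$ rencontre $\partial H_k$, et son segment initial, trac\'e dans $\mrm{int}(H_k)$, a une longueur $\geq d(z_k,\partial H_k)\geq\frac{\ln 3}{2}$~; ainsi $D_k$ co\"incide avec la boule hyperbolique ronde de m\^eme rayon et reste dans $\mrm{int}(H_k)$ (dans le cas du triangle id\'eal, l'adh\'erence $\overline{D_k}$ est tangente aux trois c\^ot\'es, mais $D_k$ lui-m\^eme reste int\'erieur). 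Chaque $D_k$ est donc un disque plong\'e dans $X$, hom\'eomorphe au disque unit\'e, et les $D_k$ sont deux \`a deux disjoints puisque les $\mrm{int}(H_k)$ le sont. Cela fournit les $-2\chi(X)$ disques voulus.

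Reste \`a \'etablir le point clef, qui sera le c\oe ur technique. D'une part, le triangle id\'eal a bien pour rayon inscrit $\frac{\ln 3}{2}$~: en pla\c{c}ant ses sommets en $-1$, $1$, $\infty$ dans le demi-plan de Poincar\'e, son centre est $i\sqrt 3$, et l'on calcule directement $d\big(i\sqrt 3,\{\Real z=1\}\big)=d\big(i\sqrt 3,\{|z|=1\}\big)=\ln\sqrt 3$. D'autre part, pour un hexagone rectangle quelconque, on exprimerait le rayon du cercle inscrit \`a l'aide des relations trigonom\'etriques des hexagones rectangles et l'on v\'erifierait qu'il est toujours $\geq\ln\sqrt 3$, l'\'egalit\'e n'ayant lieu qu'\`a la limite d\'eg\'en\'er\'ee du triangle id\'eal. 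G\'eom\'etriquement, cette minoration refl\`ete le fait que les trois g\'eod\'esiques portant trois c\^ot\'es altern\'es d'un hexagone rectangle sont deux \`a deux ultraparall\`eles (ou asymptotiques aux sommets id\'eaux), donc plus \'ecart\'ees que les c\^ot\'es d'un triangle id\'eal, lequel maximise d\'ej\`a le rayon inscrit parmi les triangles du plan hyperbolique. C'est cette estimation d'extr\'emalit\'e qui constitue l'obstacle principal de la preuve.
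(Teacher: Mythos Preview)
Your decomposition into right-angled hexagons differs from the paper's and creates the very difficulty you flag as unresolved. The paper cuts each pair of pants into two \emph{ideal triangles} rather than two hexagons: every hyperbolic pair of pants (with any mixture of geodesic boundaries and cusps) is obtained by gluing two ideal triangles along their three sides, the edges being complete simple geodesics that spiral onto the boundary components or run out to the cusps. Since the inscribed radius of an ideal triangle is \emph{exactly} $\frac{\ln 3}{2}$ --- the computation you already did --- each triangle contributes one embedded disk and the proof is complete, with no inequality left to establish.

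Your hexagon route is natural but leaves a genuine gap. The assertion that every right-angled hexagon (possibly with ideal vertices) has inradius at least $\frac{\ln 3}{2}$ is, in that framing, the entire content of the lemma, and your heuristic does not prove it: the incircle of the infinite convex region bounded by the three full geodesics carrying alternating sides need not lie inside the hexagon, since the three remaining sides may cut it off. Nor does ``the ideal triangle maximises the inradius among triangles'' help, as a hexagon is not a triangle. The statement is in fact true, but establishing it requires real work --- precisely the work the paper avoids by choosing the ideal-triangle decomposition, where the bound is attained identically. If you wish to keep your approach you must supply that trigonometric argument in full; otherwise, switching to ideal triangles turns the proof into two lines.
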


\begin{proof}
Il suffit de traiter le cas des pantalons hyperboliques, puisque toute surface hyperbolique de caractéristique d'Euler-Poincaré non nulle admet une décomposition en pantalons. Or, tout pantalon hyperbolique peut s'obtenir en collant deux triangles idéaux le long de leurs côtés, et un triangle idéal contient justement un disque de rayon $\frac{\ln 3}{2}$.
\end{proof}
 
 Nous allons étendre ce résultat aux pantalons riemanniens à bord géodésique et à courbure $|K|\leq 1$. La difficulté vient de ce que le lemme de Schwarz ne s'applique pas aux surfaces à bord.\par 
  Si $S$ est une surface à bord non vide, nous notons $\bar S$ son double lisse sans bord. Une métrique à bord géodésique sur $S$ ne se relève pas nécessairement en une métrique \emph{lisse} sur le double $\bar S$. Ce problème de régularité sera contourné grâce à un lemme d'approximation (lemme~\ref{lem:technique}).
 
\begin{lemma}\label{lem:pantalons}
Si $(P,g)$ est un pantalon riemannien complet, à bord géodésique et à courbure $|K(g)|\leq 1$, alors il existe un point $p\in P$ tel que $\ri_p(g)\geq \frac{\ln 3}{2}$.
\end{lemma}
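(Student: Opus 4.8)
L'id\'ee est de passer au double de $P$ pour pouvoir appliquer le lemme de Schwarz, d'y trouver le point cherch\'e gr\^ace au disque inscrit du pantalon hyperbolique sous-jacent (lemme~\ref{lem:pantalonhyp}), puis de le relever ; c'est le passage par la m\'etrique hyperbolique, plut\^ot qu'une application directe du th\'eor\`eme~1 au double, qui fournira le contr\^ole n\'ecessaire de la distance au bord. Je commencerais par deux r\'eductions imm\'ediates. Si $\ri(g)\geq\pi/2$, alors, comme $\ln 3/2<\pi/2$, il existe un point $p$ avec $\ri_p(g)>\ln 3/2$ et c'est termin\'e ; on suppose donc $\ri(g)<\pi/2$, de sorte que le th\'eor\`eme de minoration de Bavard donne $\ri_p(g)=\min\bigl(\tfrac12\sys_p(g),\,d_g(p,\partial P)\bigr)$ en tout point $p$ de $P$. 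Si de plus $\partial P=\emptyset$, alors $P$ est la sph\`ere priv\'ee de trois points, son groupe fondamental n'est pas ab\'elien, et le th\'eor\`eme~1 fournit d\'ej\`a $\ri(g)\geq\mrm{arcsinh}(2/\sqrt3)>\ln 3/2$. On suppose donc d\'esormais $\partial P\neq\emptyset$.

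Notons $\bar P=P\cup_{\partial P}P$ le double, $\tau$ l'involution \'echangeant les deux copies $P_1$ et $P_2$ (de lieu fixe $\mrm{Fix}(\tau)=\partial P$), et $\bar g$ la m\'etrique doubl\'ee : elle co\"incide avec $g$ sur chaque copie, est invariante par $\tau$, rend $\mrm{Fix}(\tau)$ totalement g\'eod\'esique et v\'erifie $|K(\bar g)|\leq 1$. Elle n'est cependant pas n\'ecessairement lisse le long de $\mrm{Fix}(\tau)$ ; c'est l\`a l'obstacle principal, et c'est exactement ce que le lemme d'approximation~\ref{lem:technique} permet de contourner, en ramenant l'\'enonc\'e au cas o\`u $\bar g$ est lisse, que je supposerai d\'esormais. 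Selon que $\partial P$ poss\`ede une, deux ou trois composantes, $\bar P$ est la sph\`ere priv\'ee de quatre points, la surface de genre un priv\'ee de deux points, ou la surface ferm\'ee de genre deux ; dans tous les cas $\bar P$ est orientable, de classe conforme hyperbolique, et non hom\'eomorphe \`a la sph\`ere, au disque, au cylindre ou au tore.

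Soit $\bar h$ la m\'etrique hyperbolique (de Poincar\'e) de la classe conforme de $\bar g$ ; comme $\tau$ est une isom\'etrie de $\bar g$, donc conforme, elle pr\'eserve cette classe, et l'unicit\'e de $\bar h$ donne $\tau^\ast\bar h=\bar h$. Puisque $\inf_{\bar P}K(\bar g)\geq -1=\sup_{\bar P}K(\bar h)$, le lemme de Schwarz de Yau entra\^ine $\bar g\geq\bar h$. La m\'etrique $\bar h$ passe au quotient en une structure de pantalon hyperbolique \`a bord g\'eod\'esique $(P,h)$ sur $\bar P/\tau\cong P$. D'apr\`es le lemme~\ref{lem:pantalonhyp}, $(P,h)$ contient un disque de rayon $\ln 3/2$ plong\'e et hom\'eomorphe au disque unit\'e ; en son centre $c$ on a $d_h(c,\partial P)\geq\ln 3/2$ et $\sys_c(h)\geq\ln 3$, donc $\ri_c(h)\geq\ln 3/2$. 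Relevons $c$ en $\tilde c\in\mrm{int}(P_1)$. Alors $d_{\bar h}(\tilde c,\mrm{Fix}(\tau))=d_h(c,\partial P)\geq\ln 3/2$, et pour tout $r<\ln 3/2$ la boule $B_{\bar h}(\tilde c,r)$, ne rencontrant pas $\mrm{Fix}(\tau)$, se projette isom\'etriquement sur la boule plong\'ee $B_h(c,r)\subset\mrm{int}(P)$ ; d'o\`u $\ri_{\tilde c}(\bar h)\geq\ln 3/2$, c'est-\`a-dire $\sys_{\tilde c}(\bar h)\geq\ln 3$.

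Comme $\bar g\geq\bar h$ fait cro\^itre toutes les longueurs, on conserve $\sys_{\tilde c}(\bar g)\geq\ln 3$ et $d_{\bar g}(\tilde c,\mrm{Fix}(\tau))\geq\ln 3/2$. De plus l'inclusion $P_1\hookrightarrow\bar P$ est injective sur les groupes fondamentaux — chaque composante de $\partial P_1$ est non s\'eparante dans $\bar P$, ou borde de part et d'autre une surface distincte d'un disque — si bien qu'un lacet non contractile de $P_1$ le demeure dans $\bar P$. En identifiant $P_1$ \`a $P$, sur laquelle $\bar g$ restreint redonne $g$, il vient $\sys_{\tilde c}(g)\geq\sys_{\tilde c}(\bar g)\geq\ln 3$ et $d_g(\tilde c,\partial P)=d_{\bar g}(\tilde c,\mrm{Fix}(\tau))\geq\ln 3/2$. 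Le th\'eor\`eme de minoration de Bavard donne alors $\ri_{\tilde c}(g)=\min\bigl(\tfrac12\sys_{\tilde c}(g),\,d_g(\tilde c,\partial P)\bigr)\geq\ln 3/2$, ce qui ach\`eve la preuve. Le point vraiment d\'elicat aura \'et\'e le contr\^ole de r\'egularit\'e (lemme~\ref{lem:technique}) ; la g\'eom\'etrie proprement dite — double, domination \'equivariante par une m\'etrique hyperbolique, rel\`evement du disque inscrit du pantalon hyperbolique — ne fait ensuite que combiner des ingr\'edients classiques.
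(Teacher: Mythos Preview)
Your argument is correct and follows essentially the same approach as the paper: double $P$, take the $\tau$-invariant hyperbolic metric in the conformal class, apply the lemme de Schwarz, descend to a hyperbolic pantalon, invoke the lemme~\ref{lem:pantalonhyp}, and use Bavard's theorem together with the lemme d'approximation~\ref{lem:technique} for the regularity issue. The only difference is that the paper shortcuts your detour through $\bar P$ and $\pi_1$-injectivity: once $\bar g\geq\bar h$, restricting to $P_1\cong P$ gives directly $g\geq h$ on $P$, hence $\sys_p(g)\geq\sys_p(h)$ and $d_g(p,\partial P)\geq d_h(p,\partial P)$, so $\ri_p(g)\geq\ri_p(h)$ at every point, without lifting back to the double.
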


\begin{proof}
Nous notons $\bar P$ le double sans bord de $P$, et $\iota:\bar P\rightarrow \bar P$ l'involution associée. Pour le moment, nous supposons que la métrique $g$ se relève en une métrique lisse $\bar g$ sur $\bar P$.\par
 Nous notons $\bar g_0$ l'unique métrique hyperbolique conforme à $\bar g$. L'involution $\iota$ est une isométrie de $\bar g$, mais aussi de $\bar g_0$ (par unicité de la métrique hyperbolique dans une classe conforme). Ainsi $\bar g_0$ descend en une métrique  hyperbolique à bord géodésique $g_0$ sur $P$. Par le lemme de Schwarz nous avons $\bar g\geq \bar g_0$, donc $g\geq g_0$.\par
 Nous supposons $\ri(g)<\pi/2$, sinon le lemme est trivialement vérifié. L'inégalité $g\geq g_0$ combinée au théorème de Bavard (\textsection~\ref{sec:courbure-bornee}) donne~:
$$\ri_p(g)=\min\left(\frac{\sys_p(g)}{2},d_g(p,\partial P) \right)\geq\min\left(\frac{\sys_p(g_0)}{2},d_{g_0}(p,\partial P)\right)=\ri_p(g_0)$$
en tout point $p\in P$. Nous concluons grâce au lemme précédent.\par
 Si la métrique $g$ ne se relève pas en une métrique lisse sur $\bar S$, alors nous effectuons le raisonnement ci-dessus avec la métrique $(1+\e)g^\e$, où $g^\e$ ($\e>0$) satisfaisant $|K(g^\e)|\leq 1+\e$ est donnée par le lemme~\ref{lem:technique}. Nous trouvons ainsi un point $p^\e\in P$ tel que $\ri_{p^\e}(g^\e)\geq \frac{\ln 3}{2\sqrt{1+\e}}$. Nous concluons en considérant un point limite d'une suite extraite de la famille $\{p^\e\}_{\e>0}$. Notez que $g^\e=g$ en dehors du $\e$-voisinage tubulaire du bord, il existe donc bien des suites extraites convergentes. 
\end{proof}

\subsection{Décomposition en pantalons géodésique}
Soit $S$ une surface dont les composantes de bord sont compactes. Nous appelons \emph{décomposition en pantalons} une famille maximale de courbes fermées simples de $S$ satisfaisant~:
\begin{enumerate}[i)]
\item aucune courbe ne borde un disque, ni un ruban de M\oe bius, ni un anneau~;
\item deux courbes quelconques sont disjointes et non isotopes. 
\end{enumerate}
Une décomposition en pantalons est \emph{géodésique} pour une métrique $g$ sur $S$ si toutes les courbes sont des géodésiques de $g$. 

\begin{proposition}\label{pro:pantalons}
Soit $S$ une surface dont les composantes de bord éventuelles sont compactes, et dont le groupe fondamental n'est pas virtuellement abélien. Si $g$ est une  métrique complète à bord géodésique sur $S$ et à courbure $K(g)\geq -1$, alors $(S,g)$ admet une décomposition en pantalons géodésique.
\end{proposition}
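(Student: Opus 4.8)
The plan is to realize a topological pants decomposition by simple closed geodesics, one curve at a time, each curve being constructed inside the surface obtained by cutting along the geodesics already produced.

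First I would fix a topological pants decomposition $\{c_i\}_{i\in I}$ of $S$, which exists because $\pi_1(S)$ is not virtually abelian; $I$ is finite exactly when $S$ has finite topological type, and in general the family is locally finite. Each $c_i$ is essential (it bounds no disk, no M\oe bius band and no annulus), and I would in addition arrange that no $c_i$ is freely homotopic into an end of finite area, i.e. that no $c_i$ is peripheral in the strong sense. Then I would build the $\gamma_i$ inductively along a well-ordering of $I$: assuming $\gamma_j$ for $j<i$ are pairwise disjoint simple closed geodesics of $(S,g)$, lying in the interior, with $\gamma_j$ freely homotopic to $c_j$, I cut $(S,g)$ along them to obtain a complete Riemannian surface $S_i$ with geodesic boundary $\partial S$ together with two copies of each $\gamma_j$ ($j<i$) and curvature still $\geq -1$; since $c_i$ is disjoint up to isotopy from all $c_j$ with $j<i$, it descends to an essential, non-peripheral simple closed curve lying in one component $\Sigma$ of $S_i$.

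The core step, which I would isolate as a lemma, is: \emph{in a complete Riemannian surface $(\Sigma,h)$ with geodesic boundary and $K(h)\geq -1$, every essential, non-peripheral free homotopy class of simple closed curves contains a simple closed geodesic of minimal length, and this geodesic lies in the interior of $\Sigma$}. To prove it I would first show that the infimum $\ell$ of the lengths of loops in the class is positive: otherwise, by the Margulis lemma without curvature bound, the class would be carried by a standard thin part homeomorphic to a collar, a cusp neighbourhood, or a M\oe bius neighbourhood, forcing it to be peripheral or a proper power, contrary to the hypotheses (or else exhibiting directly the core geodesic and finishing). Next I would show that a minimizing sequence stays in a fixed compact set: if it escaped, it would run into an end of $\Sigma$, and since the class is non-peripheral this cannot happen (here completeness and $K\geq -1$ are used to control the geometry of the ends). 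Arzel\`a--Ascoli then produces a limiting loop of length $\ell>0$; by the first variation formula it is a closed geodesic, it is simple (a self-intersection could be resolved to shorten it), it is freely homotopic to the original curve, hence non-peripheral, and it cannot touch $\partial\Sigma$: a contact point would be a tangency with a geodesic boundary component, which by uniqueness of geodesics would force the minimizer to be that component, against non-peripherality.

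Applying the lemma in $\Sigma$ gives $\gamma_i$, an interior simple closed geodesic of $\Sigma$ freely homotopic to $c_i$; being interior to $\Sigma$ it is disjoint from $\partial\Sigma$, hence from every $\gamma_j$ ($j<i$) and from $\partial S$, and being an interior geodesic of $\Sigma$ it is a geodesic of $(S,g)$, freely homotopic and hence (for essential non-peripheral simple closed curves) isotopic to $c_i$. By local finiteness the $\gamma_i$ assemble, curve by curve, into a family isotopic to $\{c_i\}$, which is therefore a maximal family of pairwise disjoint, pairwise non-isotopic essential simple closed curves, i.e. a geodesic pants decomposition. The main obstacle is the core lemma, and inside it the positivity of $\ell$ together with the precompactness of minimizing sequences: the usual Margulis and collar estimates are tailored to two-sided curvature bounds, so one must genuinely invoke the curvature-free version and carefully classify the thin parts and the ends using only $K\geq -1$; a secondary, bookkeeping, difficulty is the infinite-type case, where the decomposition has infinitely many curves and both the inductive construction and the final isotopy have to be organised via local finiteness.
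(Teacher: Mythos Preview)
Your overall strategy---realize a fixed topological pants decomposition by geodesics, curve by curve---is workable and close in spirit to the paper's, which instead takes a \emph{maximal} family of disjoint simple closed geodesics (via Zorn) and shows by contradiction that the complementary pieces are pants. Both routes reduce to the same core lemma: an essential, non-peripheral isotopy class of simple closed curves admits a simple geodesic representative.

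The genuine gap is in your proof of that lemma. You propose to obtain positivity of $\ell$ from a ``Margulis lemma without curvature bound'' and precompactness of a minimizing sequence from a classification of thin parts and ends ``using only $K\geq -1$'', but you do not actually supply either tool, and the standard statements require an upper curvature bound as well. With only a lower bound $K(g)\geq -1$ there is no thick--thin decomposition in the usual sense and the ends can be geometrically wild, so the sentence ``since the class is non-peripheral this cannot happen'' hides exactly the difficulty you yourself flag in your last paragraph. As written, the proposal identifies the obstacle but does not overcome it.

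The paper bypasses this entirely with the Schwarz lemma, which is the leitmotiv of the article. One first reduces to the boundaryless situation via the approximation lemma (lemme~\ref{lem:technique}), then takes the unique hyperbolic metric $g_0$ conformal to $g$; Yau's Schwarz inequality gives $g\geq g_0$. Now let $\gamma$ be the $g_0$-geodesic in the class $c$: elementary hyperbolic trigonometry produces a compact neighbourhood $K$ of $\gamma$ such that every representative of $c$ not contained in $K$ has $g_0$-length, hence also $g$-length, greater than $\ell_g(\gamma)$. Positivity of the infimum and precompactness of any minimizing sequence follow at once, and Ascoli yields the minimizer. In short, the comparison $g\geq g_0$ imports the needed compactness from the hyperbolic world for free; this single idea replaces the Margulis/end analysis your outline leaves open.
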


\begin{remarks}\begin{enumerate}
\item Nous ne supposons pas l'aire finie (comparer avec le (4) de \cite{buser}).
\item  Notre démonstration fait appel au lemme de Zorn.
\end{enumerate}\end{remarks}

\begin{proof}
Soit $(\g_i)_{i\in I}$ une famille maximale de géodésiques fermées simples satisfaisant les propriétés i) et ii). Il s'agit de montrer que les composantes de $S\setminus \cup_I \g_i$ sont des pantalons. Par l'absurde, nous supposons qu'une composante $S'$ n'est pas un pantalon. Nous appelons $g'$ la métrique induite par $g$ sur $S'$.\par
 Il existe une courbe fermée simple $c$ de $S'$ satisfaisant la propriété i). Vue comme courbe de $S$, $c$ est disjointe et non isotope aux $\g_i$. Soit $\g$ une géodésique isotope à $c$, donnée par le lemme ci-dessous. Cette géodésique contredit la maximalité de la famille $(\g_i)_I$. \end{proof}

Sous les hypothèses de la proposition, nous avons~:

\begin{lemma}
Soit $c$ une classe d'isotopie de courbe fermée simple ne bordant pas un disque, un anneau, ou un ruban de M\oe bius. La classe $c$ admet un représentant géodésique simple. 
\end{lemma}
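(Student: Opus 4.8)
The plan is to produce the representative by minimizing $g$-length in the free homotopy class of $c$. Let $L$ be the infimum of the $g$-lengths of the loops freely homotopic to $c$, and choose loops $\gamma_n$ freely homotopic to $c$ whose $g$-lengths decrease to $L$ and are bounded by $L+1$. Fix a point $p_0\in S$. Since $g$ is complete the closed balls $\overline{B}(p_0,R)$ are compact, and since each $\gamma_n$ has $g$-length at most $L+1$, any $\gamma_n$ meeting $B(p_0,R)$ is contained in $\overline{B}(p_0,R+L+1)$. After extracting a subsequence we are therefore in one of two cases: (a) the $\gamma_n$ all lie in a single compact set $K\subset S$; or (b) $\gamma_n$ leaves every compact subset of $S$.

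The first task is to rule out case (b). If the $\gamma_n$ escaped to infinity, the free homotopy class of $c$ would be represented by loops contained in arbitrarily small neighbourhoods of a single end of $S$, that is, it would be carried by that end. Since the boundary components of $S$ are compact, a nontrivial free homotopy class carried by an end is peripheral, i.e. it bounds an annular neighbourhood of the end; when the representatives are simple this is transparent, since two disjoint freely homotopic simple closed curves cobound an annulus and nesting them along the end yields a half-open annulus. This contradicts the hypothesis that $c$ does not bound an annulus, so case (a) must hold.

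In case (a), parametrize each $\gamma_n$ on the circle proportionally to arc length; the $\gamma_n$ are then uniformly Lipschitz into the compact set $K$, so by Arzel\`a--Ascoli a subsequence converges uniformly to a Lipschitz loop $\gamma_\infty$ in $K$. Lower semicontinuity of length gives that the $g$-length of $\gamma_\infty$ is at most $L$; and since $\gamma_\infty$ is $C^{0}$-close to $\gamma_n$ for $n$ large it is freely homotopic to $\gamma_n$, hence to $c$, so its $g$-length is at least $L$, and therefore equal to $L$. Were $L=0$, the loop $\gamma_\infty$ would be constant and $c$ would be null-homotopic, i.e. would bound a disc --- excluded; hence $L>0$ and $\gamma_\infty$ attains the minimal length in the free homotopy class of $c$. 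By the first variation of arc length $\gamma_\infty$ is a smooth closed geodesic, and as $\partial S$ is geodesic a closed geodesic meeting $\partial S$ would lie in $\partial S$ and so be a boundary component, forcing $c$ to be peripheral; therefore $\gamma_\infty$ lies in the interior of $S$.

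It remains to check that $\gamma_\infty$ is a \emph{simple} closed curve \emph{isotopic} to $c$. It is classical that a shortest closed geodesic in the free homotopy class of a simple closed curve is either simple or the double of a one-sided simple closed geodesic $\eta$; in the second case $c$ would be freely homotopic to $\eta$ traversed twice, hence isotopic to the boundary of a regular neighbourhood of $\eta$, so $c$ would bound a M\oe bius band --- excluded. Thus $\gamma_\infty$ is simple, and since two essential simple closed curves that are freely homotopic on a surface are isotopic, $\gamma_\infty$ represents the isotopy class $c$. I expect the real difficulty to be case (b): converting ``the minimizing sequence escapes to infinity'' into ``$c$ bounds an annulus'' is precisely where the hypotheses that $c$ is non-peripheral and that the boundary components of $S$ are compact enter, and it rests on the classification of the ends of a surface; the rest is a routine minimization together with the first variation and classical surface topology. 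An alternative is to run an embeddedness- and isotopy-preserving Birkhoff curve-shortening process starting from an embedded representative of $c$: its possible limiting behaviours are convergence to a point (so $c$ bounds a disc), escape to infinity (so $c$ bounds an annulus), convergence to the double of a one-sided simple closed geodesic (so $c$ bounds a M\oe bius band), or convergence to a simple closed geodesic isotopic to $c$ --- the first three being excluded by hypothesis.
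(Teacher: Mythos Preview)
Your approach differs genuinely from the paper's, and the comparison is instructive. You try to obtain compactness of a minimizing sequence by a purely topological argument about ends, never invoking the curvature bound $K(g)\geq -1$. The paper instead uses that bound in an essential way: after passing to the double (invoking the approximation Lemma~\ref{lem:technique} if $g$ does not extend smoothly), the Schwarz lemma produces a hyperbolic metric $g_0$ on $S$ with geodesic boundary and $g\geq g_0$; one then takes the $g_0$-geodesic $\gamma_0$ in the class $c$ (classical for hyperbolic surfaces), and hyperbolic trigonometry gives a compact neighbourhood $K$ of $\gamma_0$ outside which every representative of $c$ has $g_0$-length exceeding $\ell_g(\gamma)$. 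Since $g\geq g_0$, the same holds for $g$-length, so any minimizing sequence is trapped in $K$ and Ascoli finishes. Your route, if it can be completed, would yield the lemma for \emph{any} complete metric, not just those with $K\geq -1$.

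The gap is in your case~(b). Your assertion that ``a nontrivial free homotopy class carried by an end is peripheral'' implicitly assumes the end has an annular neighbourhood basis, which holds only for isolated planar ends --- in other words, for surfaces of finite type. But the ambient proposition is stated for any $S$ with compact boundary components and non--virtually-abelian fundamental group, and its proof appeals to Zorn's lemma precisely because infinite type is allowed. For a surface such as the Loch Ness monster the unique end has infinite genus and no annular neighbourhoods; your sentence ``two disjoint freely homotopic simple closed curves cobound an annulus and nesting them along the end yields a half-open annulus'' then has no content, since the successive end-neighbourhoods $U_n$ are not annuli and the $\gamma_n$ need not be simple. The underlying topological fact you are reaching for \emph{is} true --- an essential simple closed curve that bounds neither a disc, an annulus, nor a M\oe bius band cannot be freely homotoped outside every compact --- but the clean way to see it is again via an auxiliary hyperbolic structure: the hyperbolic geodesic representative of $c$ lies in every $\pi_1$-injective subsurface with geodesic boundary into which $c$ can be homotoped, and a nested exhaustion by such subsurfaces yields a contradiction. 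That is essentially the paper's argument, which moreover uses the \emph{conformal} hyperbolic metric and the inequality $g\geq g_0$ to control $g$-lengths directly. The remainder of your proof (Ascoli in case~(a), first variation, the boundary discussion, and the simplicity argument via the one-sided alternative) is correct.
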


\begin{proof}
Nous supposons, pour le moment, que la métrique $g$ se relève en une métrique lisse sur le double sans bord $\bar S$. Sous cette hypothèse, il existe une métrique hyperbolique à bord géodésique $g_0$ sur $S$ vérifiant $g \geq g_0$ (voir la preuve du lemme~\ref{lem:pantalons}).\par
 Soit $\g$ la $g_0$-géodésique dans la classe d'isotopie $c$. Par des formules classiques de trigonométrie hyperbolique, il existe un voisinage compact $K$ de $\g$ tel que tout lacet de $c$ non contenu dans $K$ soit de $g_0$-longueur supérieure à $\ell_g(\g)$. Comme $g\geq g_0$, la $g$-longueur des lacets de $c$ non contenus dans $K$ est elle aussi supérieure à $\ell_g(\g)$. Nous concluons à l'existence d'un représentant de $c$ de $g$-longueur minimale en appliquant le théorème d'Ascoli. Par minimalité, ce représentant est géodésique et simple.\par
 Si $g$ ne se relève pas en une métrique lisse sur le double sans bord, alors nous approchons $g$ par une métrique $g^\e$ satisfaisant $K(g^\e)\geq -1-\e$ (lemme~\ref{lem:technique}). Le raisonnement ci-dessus appliqué à la métrique $(1+\e)g^\e$ produit un compact $K^\e$ et un représentant $\g^\e$ de $c$. Comme $|g-g^\e|\leq \e$ sur le fibré unitaire de $(S,g)$, nous trouvons facilement un compact $K$ tel que tout élément de $c$ non contenu dans $K$ soit de $g$-longueur supérieure à $\ell_g(\g)$. Nous concluons en appliquant le théorème d'Ascoli.
\end{proof}

\subsection{Un lemme d'approximation}
Ce qui suit est très largement inspiré du lemme~3.5 de \cite{bavard-these} et de sa démonstration.

\begin{lemma}\label{lem:technique}
Soit $(S,g)$ une surface riemannienne à bord géodésique non vide et à courbure $K(g)\leq -1$ (resp. $K(g)\geq -1$, resp. $K(g)\leq 1$). Pour tout $\e>0$ il existe une métrique $g^\e$ sur $S$ telle que
\begin{enumerate}[i)]
\item $g^\e$ se relève en une métrique lisse sur $\bar S$,
\item $K(g^\e)\leq -1+\e$ sur $S$ (resp. $K(g^\e)\geq -1-\e$, resp. $K(g)\leq 1+\e$),
\item $g^\e=g$ en dehors de $N_\e(\partial S)$,
\item $|g-g^\e| \leq\e$ sur le fibré unitaire de $N_\e(\partial S)$,
\end{enumerate}
où $N_\e(\partial S)$ désigne le $\e$-voisinage tubulaire de $\partial S$ pour la métrique $g$.   
\end{lemma}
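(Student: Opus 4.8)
The plan is to adapt the argument of Lemma~3.5 of \cite{bavard-these}: starting from a metric $g$ with geodesic boundary on $S$, one smooths its double by a local modification supported in a thin collar of $\partial S$. The obstruction to smoothness of the double $\bar g$ is that the second fundamental form of $\partial S$ vanishes (the boundary is $g$-geodesic), but the higher normal derivatives of $g$ need not match up across $\partial S$, so $\bar g$ is only $C^{1,1}$ in general. First I would introduce Fermi coordinates $(t,s)$ in a tubular neighbourhood $N_{2\e}(\partial S)$, with $t\in(-2\e,2\e)$ the signed distance to $\partial S$ (extended to $\bar S$) and $s$ arclength along $\partial S$; in these coordinates $g$ (and hence $\bar g$) takes the form $\diff t^2 + \f(t,s)^2\,\diff s^2$ with $\f$ even in $t$ to order one, $\f(0,s)>0$, $\partial_t\f(0,s)=0$, and the curvature given by $K=-\partial_t^2\f/\f$.

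Next I would replace $\f$ on the collar by a function $\f^\e$ obtained by convolving $\partial_t^2\f(\cdot,s)$ in the $t$-variable with a smooth even mollifier supported in $[-\d,\d]$ (with $\d=\d(\e)$ small), then integrating twice in $t$ with the initial conditions $\f^\e(0,s)=\f(0,s)$, $\partial_t\f^\e(0,s)=0$, and finally interpolating via a cutoff so that $\f^\e=\f$ outside $N_\e(\partial S)$. Because $\partial_t^2\f$ is already continuous (indeed Lipschitz, as $K$ is bounded), the mollified version $\partial_t^2\f^\e$ is smooth, even in $t$, converges uniformly to $\partial_t^2\f$ as $\e\to 0$, and still satisfies the one-sided curvature bound up to an error $o(1)$; this gives items~i) and~ii), with the constant $-1\mp\e$ (resp.\ $1+\e$) absorbing the convolution error. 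The evenness in $t$ at $t=0$ is exactly what makes the resulting metric $g^\e=\diff t^2+(\f^\e)^2\diff s^2$ descend to a \emph{smooth} metric on $\bar S$; the cutoff interpolation is the standard point where one must check that inserting the transition does not destroy the curvature inequality, which works because the model constant-curvature profile can be used as a barrier and because $\f$ and $\f^\e$ are $C^1$-close.

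Finally, items~iii) and~iv) are built into the construction: $g^\e$ agrees with $g$ outside $N_\e(\partial S)$ by the cutoff, and on the unit tangent bundle over $N_\e(\partial S)$ the difference is controlled by $\|\f^\e-\f\|_{C^0}+\|\partial_t\f^\e-\partial_t\f\|_{C^0}$, which can be made $<\e$ by integrating the uniform smallness of $\partial_t^2\f^\e-\partial_t^2\f$ over the thin interval $[-\e,\e]$ and choosing $\d$ small; one also keeps $\f^\e>0$ throughout so that $g^\e$ remains a genuine metric, and keeps $\partial S$ geodesic for $g^\e$ since $\partial_t\f^\e(0,s)=0$ is preserved. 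The main obstacle is the second step: arranging the double interpolation between the mollified curvature profile and the original one in such a way that the sign condition on $K(g^\e)$ is preserved uniformly in $s$ (the mollification a priori only gives it up to $o(1)$, and the cutoff region is where a careless construction can introduce a large positive or negative curvature spike); this is handled exactly as in \cite{bavard-these}, by comparing $\f^\e$ with the solution of the ODE $\partial_t^2 u=(1\pm\e)u$ having the same Cauchy data and using a maximum-principle argument on $\f^\e-u$ to pin down the sign of $\partial_t^2\f^\e$.
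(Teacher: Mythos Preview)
Your outline is sound and would lead to a proof, but it takes a more elaborate route than the paper. Both arguments begin identically: Fermi coordinates $(t,\theta)$ on a collar of a boundary component, in which $g=\diff t^2+f(t,\theta)^2\diff\theta^2$ with $f(0,\theta)=1$, $f_t(0,\theta)=0$, and $K=-f_{tt}/f$. The divergence is in how one manufactures an even-in-$t$, smooth replacement for $f$ near $t=0$. You propose to mollify the (evenly extended) second derivative $f_{tt}$ in $t$ and re-integrate; the paper instead simply replaces $f$ by its degree-two Taylor polynomial
\[
p(t,\theta)=1+\tfrac{t^2}{2}\,f_{tt}(0,\theta),
\]
which is trivially smooth and even. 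This short-circuits both the mollification step and the ODE/maximum-principle barrier you invoke for the curvature sign: with $F=p+\psi_\delta(f-p)$ and the Taylor--Lagrange bounds $\|f-p\|_\infty\le M\delta^3$, $\|(f-p)_t\|_\infty\le M\delta^2$, $\|(f-p)_{tt}\|_\infty\le M\delta$ on $[0,\delta]$, the derivatives of the cutoff (of size $\delta^{-1}$, $\delta^{-2}$) are exactly compensated, and one reads off $|F_{tt}-f_{tt}|\le 4M\delta$ and $|F-f|\le M\delta^3$ directly. The one-sided curvature inequality $F_{tt}\gtrless(1\mp\e)F$ then follows by elementary algebra once $\delta$ is small enough, with no comparison argument needed. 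Your route would reach the same conclusion (the mollification error is $o(1)$ in sup norm, and re-integration over $[0,\delta]$ gains two powers of $\delta$ to match the cutoff losses), but the Taylor-polynomial trick is shorter and makes the evenness in $t$---hence the smooth extension to $\bar S$---immediate.
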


\begin{remarks} 
\begin{enumerate}
\item Si $g$ est complète, alors $g^\e$ est elle aussi complète.
\item Si $|K(g)|\leq 1$ alors $|K(g^\e)|\leq 1+\e$, car la métrique $g^\e$ est construite de la même manière indépendamment du cas considéré.
\end{enumerate} \end{remarks}

\begin{proof}
Soit $\g$ une composante de bord compacte, que nous regardons comme une géodésique d'abscisse curviligne. Soit $\mathsf n$ un champ de vecteur unitaire normal à $\g$. Nous définissons une carte sur un voisinage annulaire $U$ de $\g$ par~:
$$\begin{array}{ccc}
[0,t_0[\times \R/\ell \Z & \longrightarrow & U \\
(t,\theta) & \longmapsto & \exp_{\g(\theta)} (t\mathsf n(\theta))
\end{array}.$$
Nous supposons $t_0>0$ suffisamment petit, et désignons par $\ell$ la longueur de $\g$. Dans cette carte, la métrique $g$ prend la forme
$$g_{(t,\theta)}=\diff t^2+f(t,\theta)^2 \diff \theta^2, $$
où $f$ est une fonction lisse positive sur $[0,t_0[\times \R/\ell\Z$ satisfaisant 
$$\left\{\begin{array}{rcl}
 f(0,\theta) & = & 1,\\
f_t(0,\theta) & = & 0\quad (\partial S \textnormal{ géodésique}),\\
  f_{tt}(t,\theta) & = & -K(t,\theta)f(t,\theta) \textnormal{ pour tout }  (t,\theta).
\end{array}\right.$$
Soit $p$ le polynôme de Taylor de degré deux de $f$ en la variable $t$~:
$$p(t,\theta)=f(0,\theta)+\frac{t^2}{2} f_{tt}(0,\theta)\textnormal{ pour tout }  (t,\theta).$$
Par l'inégalité de Taylor-Lagrange, il existe $M>0$ tel que pour tout $\delta\in]0,t_0[$
$$\left\{\begin{array}{rcl}
\|f-p\|_\infty &\leq & M\delta^3\\
\|f_t-p_t\|_\infty &\leq & M\delta^2\\
\|f_{tt}-p_{tt}\|_\infty &\leq & M\delta\\
\end{array}\right. \textnormal{ pour la norme sup sur } [0,\delta]\times \R/\ell\Z.$$
Fixons une fonction plateau lisse
$$\psi:[0,t_0[\rightarrow [0,1]\textnormal{ telle que } \left\{ \begin{array}{ll}
\psi(t)=0 & \textnormal{si } t\leq \frac{t_0}{4}\\
\psi(t)=1 & \textnormal{si } t\geq \frac{t_0}{2}\\
\end{array}\right. .$$
Pour tout $\delta\in]0,t_0[$ nous posons $\psi_\delta(t)=\psi(t/\delta)$, de sorte que
$$\|\psi'_\delta\|_\infty\leq \frac{\|\psi'\|_\infty}{\delta}\ \textnormal{et}\ \|\psi''_\delta\|_\infty\leq \frac{\|\psi''\|_\infty}{\delta^2}  \textnormal{ pour la norme sup sur }[0,t_0[ .$$
Considérons alors la fonction $F=p +\psi_\delta (f-p)$~; elle vérifie
\begin{eqnarray*}
  F-f              &= & (\psi_\delta-1) (f-p),\\
\|F-f \|_\infty  & \leq & \|f-p \|_\infty   ,\\
\|F-f \|_\infty & \leq & M\delta^3.
\end{eqnarray*}
Et en dérivant par rapport à $t$, il vient~:
\begin{eqnarray*}
 F_{tt} - f_{tt} & = & \psi_\delta'' (f-p)+2 \psi_\d' (f-p)_t+(1-\psi_\d) (f-p)_{tt}, \\
\|F_{tt}-f_{tt}\|_\infty & \leq & 4M\delta.
\end{eqnarray*}
L'hypothèse $K(g)\leq -1$ (resp. $K(g)\geq -1$, resp. $K(g)\leq 1$) équivaut à $f_{tt}\geq f$ (resp. $f_{tt}\leq f$, resp. $f_{tt}\geq -f$), ainsi nous trouvons (en supposant $\delta<1$)
\begin{eqnarray*}
 F_{tt} & \geq &  f_{tt}-4M\delta, \\
 & \geq & f-4M\delta,  \\
 &\geq & F- 5 M\delta\quad (\textnormal{resp. } F_{tt}\leq F+5M\delta, \textnormal{ resp. } F_{tt}\geq -F-5M\delta ).
 \end{eqnarray*}
Comme $f(0,\theta)=1$ pour tout $\theta$, il existe une constante $N>0$ telle que 
$$F\geq 1-N\delta^2 \textnormal{ sur } [0,\delta]\times \R/\ell\Z.$$
\'Etant donné $\e$, nous choisissons $\delta<\e$ tel que
$$\left\{\begin{array}{rcl}
1-N\delta^2 & \geq  & 1/2 \\
\e & \geq & 10M\delta
\end{array}\right. .$$
De cette façon, nous avons $\e F\geq 5M\delta$ et
$$F_{tt}\geq (1-\e) F\quad (\textnormal{resp. } F_{tt}\leq (1+\e) F, \textnormal{ resp. } F_{tt}\geq -(1+\e) F) .$$
Finalement, la métrique $g'=\diff t^2+F(t,\theta)^2\diff \theta^2$ satisfait 
$$\left\{\begin{array}{rcll}
K(g') & \leq & -1+\e & \textnormal{sur}\ [0,\delta t_0[\times \R/\ell\Z,\\
(\textnormal{resp. } K(g') & \geq & -1-\e) & \\
(\textnormal{resp. } K(g') & \leq & 1+\e) & \\
 |g'-g| & \leq & M\delta^3 & \textnormal{sur le fibré unitaire de}\ [0,\delta t_0[\times \R/\ell\Z,
\end{array}\right. $$
et $g'$ se prolonge en une métrique lisse sur le double $]-\delta t_0,\delta t_0[\times \R/\ell\Z$ du cylindre semi-ouvert $[0,\delta t_0[\times \R/\ell\Z$, car $p$ est une fonction paire en la variable $t$. Cette construction locale (au voisinage d'une composante de bord) suffit pour conclure puisque nous avons $g'=g$ sur $]\delta t_0/2,\delta t_0[ \times \R/\ell\Z$.\par
 Si $\g$ est une composante de bord non compacte, alors nous effectuons le même raisonnement en prenant $M$, $N$ et $\delta$ comme fonctions de $\theta$.
 \end{proof}

\section{Majoration de la borne supérieure du rayon d'injectivité}\label{sec:majoration}

 Nous avons minoré la borne supérieure du rayon d'injectivité $\ri$ en supposant la courbure bornée $|K|\leq 1$. Cette hypothèse n'est évidemment pas suffisante pour majorer $\ri$, puisqu'elle ne s'oppose pas à la multiplication par un scalaire plus grand que $1$. Il convient donc d'introduire une condition supplémentaire, la plus naturelle étant la normalisation par l'aire. De manière équivalente, nous considèrerons la quantité homogène $\ri^2/\area$.\par
 Soit $S$ une surface fermée. Sans hypothèse sur la courbure,  $\ri^2/\area$ n'admet pas de borne supérieure finie sur l'espace des métriques riemanniennes sur $S$. En effet, on peut prendre un disque topologique dans $S$, et lui donner la forme d'un cylindre euclidien avec au bout une hémisphère de courbure constante. Ce disque a une aire arbitrairement petite, et le point au centre de l'hémisphère a un rayon d'injectivité arbitrairement grand, pourvu que le cylindre soit suffisamment long et étroit. La métrique construite est de classe $C^1$, mais nous pouvons la lisser tout en préservant la symétrie de révolution, ce qui assure que le rayon d'injectivité ne sera pas beaucoup modifié.\par
 Ainsi, l'existence d'une borne supérieure finie sur $\ri$ requiert des contraintes assez fortes sur la géométrie. Si nous travaillons parmi les métriques à courbure $|K|\leq 1$, alors l'aire d'un disque de rayon $r\leq\pi$ est minorée par $2\pi(1-\cos r)$, d'où
 $$\ri\leq \pi- \arccos\left(\frac{\area}{2\pi}-1 \right)\leq \pi$$
dès que l'aire est inférieure ou égale à $4\pi$. Cette borne est en un sens optimale, puisqu'on peut toujours identifier le bord d'un disque de manière à obtenir la surface topologique souhaitée. Comme le volume minimal de $\R^2$ vaut $2\pi(1+\sqrt{2})$ (théorème de Bavard et Pansu \cite{min-vol}), nous pouvons construire des métriques singulières avec $\ri$ arbitrairement grand dès que l'aire est supérieure ou égale à cette valeur (voir la métrique réalisant le volume minimal de $\R^2$), et cette borne est en un sens optimale.\par
 Puisque courbure et aire bornée ne suffisent pas à contrôler $\ri$, regardons ce qui se passe en courbure négative ou nulle. Si la courbure est constante égale à $-1$, alors il existe une borne optimale sur $\ri$ dépendant de la caractéristique d'Euler-Poincaré (Bavard \cite{bavard}), précisément
$$\cosh \ri\leq \frac{1}{2\sin\g_\chi}\quad\textnormal{avec}\quad \g_\chi=\frac{\pi}{6-6\chi}.$$
Ce résultat s'étend aux surfaces à courbure négative pincée grâce au lemme de Schwarz.
La preuve de Bavard repose sur un argument d'empilement, déjà utilisé dans le calcul de la borne supérieure de la systole sur l'ensemble des surfaces hyperboliques fermées orientables de genre deux (voir Bavard \cite{bavard92} et F.~Jenni \cite{jenni}).
Cet argument a été étendu à la courbure négative ou nulle par M.~Katz et S.~Sabourau (voir le théorème~1.3 de \cite{sabourau}). En effectuant de légers changements dans la preuve de leur théorème, nous obtenons~:

\begin{theorem}
Soit $S$ une surface fermée de caractéristique d'Euler-Poincaré négative. Toute métrique $g$ sur $S$ à courbure négative ou nulle satisfait
$$\frac{\ri(g)^2}{\area(g)} \leq \left[ 6(1-\chi) \tan\left(\frac{\pi}{6(1-\chi)}\right) \right]^{-1}.$$
Il existe des métriques plates à singularités coniques d'angles supérieurs à $2\pi$ (donc à courbure négative ou nulle au sens des espaces CAT(0)) réalisant l'égalité.
\end{theorem}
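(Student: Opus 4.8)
The strategy is to adapt the packing argument of Katz--Sabourau (Theorem~1.3 of \cite{sabourau}), which bounds $\sys^2/\area$, to the injectivity radius. The point is that in nonpositive curvature the geometry around a point realizing a large injectivity radius looks like a large embedded disk, and one compares the area of such a disk in $(S,g)$ with the area of the extremal configuration, which is a regular ideal-type polygon filled by flat pieces. First I would fix a point $p\in S$ with $\ri_p(g)$ close to $\ri(g)$ and let $r<\ri_p(g)$; since $K(g)\leq 0$, the exponential map $\exp_p$ is a diffeomorphism onto the embedded metric disk $B(p,r)$, and by the Bishop--Gromov / Günther-type comparison in nonpositive curvature the area of $B(p,r)$ is \emph{at least} that of the Euclidean disk of the same radius, i.e. $\area(B(p,r))\geq \pi r^2$. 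That alone gives $\ri^2/\area\leq 1/\pi$, which is too weak; the topology of $S$ must be fed in to improve the constant to the stated value $[6(1-\chi)\tan(\pi/(6(1-\chi)))]^{-1}$.

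The improvement comes from the standard trick of passing to a cyclic-type packing: consider the shortest noncontractible geodesic loop based at $p$, or rather work with the systolic geometry of the surface as in Katz--Sabourau. Concretely, one covers a neighborhood of $p$ by $N:=-6\chi$ triangle-like regions (or, dually, one uses that a hyperbolic surface of characteristic $\chi$ decomposes into $-2\chi$ ideal triangles, each carrying a disk of radius $\tfrac{\ln 3}{2}$ — cf. Lemma~\ref{lem:pantalonhyp} — and one tracks how the disks of radius $\ri$ around the $p_i$ must be packed). The cleanest route is to reproduce the Katz--Sabourau estimate verbatim with ``$\sys$'' replaced by ``$2\ri_p$'': in nonpositive curvature $\ri_p=\sys_p/2$ at every point (this is recalled in \textsection\ref{sec:schwarz}), so a point maximizing $\ri_p$ is a point maximizing $\sys_p$, and $2\ri(g)=\sup_p\sys_p(g)$. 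The local-to-global area comparison of Katz--Sabourau uses only that geodesic balls of radius $<\sys_p/2$ around well-chosen points are embedded and satisfy the Euclidean area lower bound, together with a combinatorial packing of $6(1-\chi)$ such balls into $S$ forced by the topology (via a triangulation of $S$ by $-2(1-\chi)\cdot 3 $ ... i.e. the Euler-characteristic count that a triangulation has at least $6(1-\chi)$ ``corners'' meeting a fixed vertex set). Each such ball contributes area $\geq$ that of a Euclidean circular sector of angle $\pi/(6(1-\chi))$ and radius $\ri$, and summing these disjoint contributions over the packing yields $\area(g)\geq 6(1-\chi)\tan(\pi/(6(1-\chi)))\,\ri(g)^2$, which is the claimed inequality after rearrangement.

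For the equality case, I would exhibit the extremal metric directly: take the flat surface obtained by gluing $-2\chi$ copies of a flat equilateral triangle-like region in the combinatorial pattern of a pants decomposition, so that around each of the $6(1-\chi)$ cone points the total angle exceeds $2\pi$; such a metric is CAT(0), hence nonpositively curved in the comparison sense, and its injectivity-radius supremum is attained at a cone point where the disk of radius $\ri$ is exactly a union of Euclidean sectors of angle $\pi/(6(1-\chi))$, so the area inequality above is an equality. One should note that the Theorem's statement only claims existence of such singular flat extremizers, not a smooth one, which is consistent with the fact that the Euclidean area lower bound $\area(B(p,r))\geq\pi r^2$ is an equality for the smooth metric only if the metric is flat near $p$, which cannot happen globally on a closed surface with $\chi<0$.

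The main obstacle I expect is the combinatorial packing step: one must show that the topology of $S$ (with $\chi<0$) forces $6(1-\chi)$ essentially disjoint ``wedges'' of angle $\pi/(6(1-\chi))$ and radius $\ri$ around a maximizing point — equivalently, that the systolic configuration of Katz--Sabourau (built for closed surfaces) applies with no loss. In \cite{sabourau} this is done via a careful analysis of the cut locus and a triangulation argument; the ``léger changement'' promised in the text is presumably the observation that one only needs one point of maximal $\sys_p$ (not the global systole), and that $\ri_p=\sys_p/2$ lets us phrase everything in terms of embedded balls. Verifying that this substitution does not break the packing inequality — in particular that the balls/wedges used remain embedded and pairwise disjoint when centered at the single maximizing point rather than spread along a systolic geodesic — is the delicate part; everything else is the comparison-geometry area estimate, which is routine in nonpositive curvature.
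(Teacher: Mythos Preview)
Your proposal has the right ingredients in spirit (Cartan--Hadamard, Euclidean area comparison, a combinatorial bound coming from $\chi$), but it misidentifies the actual mechanism, and the ``packing of $6(1-\chi)$ balls/wedges around several points $p_i$'' picture is not what is happening. There is no multi-ball packing, and Lemma~\ref{lem:pantalonhyp} plays no role here.

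The argument in the paper works with a \emph{single} point $p$ realizing $\ri(g)$ and a \emph{single} fundamental domain. One lifts to $T_pS$ via $\exp_p$ (a universal covering by Cartan--Hadamard), takes the Dirichlet--Vorono\"i cell $D_0(\tilde g)$ of the origin for the pulled-back metric $\tilde g$, and compares it with the \emph{Euclidean} Vorono\"i polygon $D_0(g_p)$ built from the same finite set $P$ of neighbouring preimages of $p$. Rauch gives $d_{g_p}\leq d_{\tilde g}$, hence $D_0(g_p)\subset D_0(\tilde g)$ and $\area_{g_p}(D_0(g_p))\leq\area_{\tilde g}(D_0(\tilde g))=\area(g)$. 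The polygon $D_0(g_p)$ has, say, $k$ sides; cutting it into $k$ isosceles triangles with apex at the origin, each height (the Euclidean distance from $0$ to a side) equals $\tfrac12|\tilde p|_{g_p}=\tfrac12 d_{\tilde g}(0,\tilde p)\geq \ri(g)$, so by convexity of $\tan$ one gets $\area(D_0(g_p))\geq \ri(g)^2\, k\tan(\pi/k)$. Finally $k$ is bounded above, not by a packing count, but by the Euler relation: the image of $\partial D_0(\tilde g)$ in $S$ is a graph giving a CW-decomposition with one $2$-cell, $e$ edges, $v$ vertices of valence $\geq 3$; hence $3v\leq 2e$ and $\chi=1-e+v$ force $e\leq 3(1-\chi)$, so $k\leq 2e\leq 6(1-\chi)$. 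Since $x\mapsto x\tan(\pi/x)$ is decreasing on $[2,\infty)$, this yields exactly the stated bound.

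Two concrete corrections to your outline: (i) your ``main obstacle'' (arranging disjoint wedges around several points) is a non-issue --- the wedges are just the triangles of one convex Euclidean polygon, automatically disjoint and filling it; the only subtle point is the inclusion $D_0(g_p)\subset D_0(\tilde g)$, which follows from Rauch plus the fact that $|\tilde p|_{g_p}=d_{\tilde g}(0,\tilde p)$ along radial geodesics; (ii) in the equality case the extremal metric is a regular Euclidean $6(1-\chi)$-gon with sides identified in pairs, the supremum of the injectivity radius is attained at the \emph{center} of the polygon (the incircle radius), not at the cone points, and the cone points come from the vertex-cycles of the identification, not $6(1-\chi)$ of them.
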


\begin{remark}
Asymptotiquement cela donne naturellement $\pi \ri^2 \lesssim \area$, ce que nous pouvions obtenir par une application directe du théorème de comparaison de Rauch. Nous observons que le comportement de $\ri$ est nettement différent de celui de la systole (voir \cite{gromov} \textsection~2.C)
\end{remark}

 Décrivons une famille de métriques plates singulières réalisant le cas d'égalité. On part d'un polygone euclidien régulier à $n=6(1-\chi)$ côtés, et l'on identifie les côtés par paires de manière à obtenir la surface $S$ de caractéristique $\chi$ souhaitée. Ces identifications rangent les sommets en cycles de longueur $3$, d'où les angles aux points coniques valent $3\pi (n-2)/n$. Elles ont été largement étudiées, on trouve déjà dans le livre de Fricke et Klein (\cite{klein} p.~267) toutes les identifications donnant la surface orientable de genre deux. Par construction, le rayon du cercle inscrit et l'aire du polygone réalisent l'égalité dans l'inégalité du théorème. Pour certaines identifications, on peut montrer par des calculs  \emph{ad hoc} que le rayon du cercle inscrit dans le polygone est égal au maximum du rayon d'injectivité de la surface plate à singularités coniques.

\begin{proof}
Soit $g$ une métrique riemannienne sur $S$ à courbure négative ou nulle, et soit $p$ un point de $S$ réalisant le maximum du rayon d'injectivité de $(S,g)$. Sur l'espace tangent $T_pS$, nous allons comparer la norme euclidienne $g_p$ avec la métrique $\tilde g$ image réciproque de $g$ par l'exponentielle $\exp_p$. Rappelons que, par le théorème de Cartan-Hadamard, l'exponentielle $\exp_{p}:T_pS\rightarrow S$ définit un revêtement isomorphe au revêtement universel (d'où la notation $\tilde g$).\par
    
   La cellule de Dirichlet-Vorono\"i de l'origine pour la métrique $\tilde g$ 
$$D_0(\tilde g)=\{x\in T_p S~;~  d_{\tilde g}(x,0)\leq d_{\tilde g}(x,\tilde p)\ \textnormal{pour tout relevé}\ \tilde p\in \exp_p^{-1}(p)\} $$
est un disque topologique bordé par une courbe lisse par morceaux composée d'un nombre fini d'arcs. Chaque arc est supporté par une courbe
$$L_{\tilde p}=\{x\in T_p S~;~ d_{\tilde g}(x,0)= d_{\tilde g}(x,\tilde p)\}$$ 
formée des points équidistants de l'origine et d'un relevé $\tilde p\in \exp_p^{-1}(p)\setminus\{0\}$. Nous notons $P$ l'ensemble des relevés $\tilde p$ tels que $L_{\tilde p}$ supporte un côté de $\partial D_0(\tilde g)$. Nous allons comparer la cellule $D_0(\tilde g)$ avec le polygone convexe suivant~:
$$D_0(g_p)=\{x\in T_p S~;~d_{g_p}(x,0)\leq d_{g_p}(x,\tilde p)\ \textnormal{pour tout relevé}\ \tilde p\in P\}.$$
Il s'agit de la cellule de Dirichlet-Vorono\"i de l'origine relativement à l'ensemble $P\cup\{0\}$ pour la norme euclidienne $g_p$. Le choix de l'ensemble $P\cup\{0\}$ assure que le nombre de côtés de $D_0(g_p)$ est inférieur ou égal à celui de $D_0(\tilde g)$.\par

 Selon le théorème de comparaison de Rauch, nous avons 
 $d_{g_p}(\cdot,\cdot) \leq d_{\tilde g}(\cdot,\cdot)$.
 Nous en déduisons $D_0(g_p)\subset D_0(\tilde g)$ et  $\area_{g_p}(D_0(g_p))\leq \area_{\tilde g}(D_0(\tilde g))$. Ainsi
 $$\frac{\ri(g)^2}{\area(g)} = \frac{\ri(g)^2}{\area_{\tilde g}(D_0(\tilde g))} \leq \frac{\ri(g)^2}{\area_{g_p}(D_0(g_p))} .$$
Il reste donc à minorer l'aire du polygone convexe $D_0(g_p)$.\par
 
 Soit $k$ le nombre de côtés du polygone $D_0(g_p)$. Les segments reliant l'origine aux sommets divisent le polygone en $k$ triangles isocèles. Pour chaque triangle, la hauteur issue de l'origine mesure au moins $\ri(g)$, de sorte que l'aire du triangle vaut au moins $\ri(g)^2\tan(\theta/2)$, où $\theta$ désigne l'angle en l'origine. En sommant les aires des triangles, et par convexité de la fonction tangente sur $\R_+$, nous trouvons~:
\begin{eqnarray*}
\area(D_0(g_p)) & \geq & \sum_{i=1,\ldots,k} \ri(g)^2\tan(\theta_i/2), \\
 & \geq & \ri(g)^2 k\tan(\pi/k). 
\end{eqnarray*} 
 Comme la dérivée de la fonction $x\mapsto x\tan(\pi/x)$ sur $[2,+\infty[$ est donnée par
 $$\tan(\pi/x)-\frac{\pi/x}{\cos^2(\pi/x)}= \frac{\sin(2\pi/x)-2\pi/x}{2\cos^2(\pi/x)}< 0,$$
majorer le nombre de côtés $k$ permettra de minorer l'aire $\area(D_0(g_p))$.\par

  L'image de $\partial D_0(\tilde g)$ dans $S$ forme le $1$-squelette d'une décomposition cellulaire de $S$. Cette décomposition se compose d'une face, de $e$ arêtes, et de $v$ sommets. Chaque sommet étant adjacent à au moins trois arêtes, nous avons $3v\leq2e$. Par conséquent $\chi(S)=e-v+1\leq -e/3+1$, ou de manière équivalente $e\leq 3(1-\chi(S))$.
Ceci permet de conclure car le nombre de côtés de $D_0(\tilde g)$ est égal à $2e$, et $D_0(g_p)$ a moins de côtés que $D_0(\tilde g)$.\end{proof}

\section{Deux applications immédiates du lemme de Schwarz}\label{sec:applications}

 Rappelons un théorème de Yamada (\cite{yamada,schwarz})~:
 
\begin{theoremnonumber}[Yamada] 
Si $\g$ est une géodésique fermée primitive non simple d'une surface hyperbolique, alors $\ell(\g)\geq 2\mathrm{arccosh}(3)$.
 Cette constante est optimale quel que soit le type topologique de la surface. De plus, elle n'est atteinte que pour le pantalon hyperbolique à trois pointes.
\end{theoremnonumber}

  Ce théorème a été étendu aux surfaces complètes à courbure $0>K\geq -1$ par P.~Buser (\cite{buser} théorème~4.3.1). Dans le paragraphe suivant la démonstration du théorème, Buser se demande si l'on peut supprimer l'hypothèse de courbure négative. Nous répondons par l'affirmative, en combinant le théorème de Yamada avec le lemme de Schwarz (\textsection~\ref{sec:schwarz}) nous obtenons immédiatement~: 

\begin{theorem4}
Soit $S$ une surface, éventuellement à bord non vide, dont le groupe fondamental n'est pas virtuellement abélien. Soient $g_0$ une métrique hyperbolique sur $S$, et $\g$ un lacet librement homotope à une géodésique primitive non simple de $g_0$. Si $g$ est une métrique complète sur $S$ à courbure $K(g)\geq -1$, alors $$\ell_g(\g)\geq 2\mrm{arccosh}(3).$$
 Cette constante est optimale quel que soit le type topologique de la surface. Si elle est atteinte, alors la surface est homéomorphe à la sphère privée de trois points.
\end{theorem4}

Dans le même esprit, nous avons le lemme suivant~:

\begin{lemma}
Soit $S$ une surface orientable dont les composantes de bord sont compactes, et dont le groupe fondamental n'est pas abélien. Soit $g$ une métrique complète sur $S$, à bord géodésique et à courbure $K(g)\geq -1$.
Si $\g$ et $\delta$ sont deux géodésiques fermées simples de $(S,g)$ qui s'intersectent en exactement un point, alors 
$$\sinh\left(\frac{\ell_g(\g)}{2}\right)\sinh\left(\frac{\ell_g(\d)}{2}\right)\geq 1.$$
\end{lemma}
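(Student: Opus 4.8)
The strategy mirrors the proof of Lemma~\ref{lem:pantalons} and Théorème~4: pass to a comparison hyperbolic metric via the Schwarz lemma, use monotonicity to transfer the length inequality, then invoke the corresponding fact in hyperbolic geometry. First I would reduce to the case where $g$ lifts to a smooth metric on the double $\bar S$ (if $\partial S\neq\varnothing$), exactly as in the proof of Lemme~\ref{lem:pantalons}: approximate by $(1+\e)g^\e$ using Lemme~\ref{lem:technique}, carry out the argument below for each $\e$, and pass to the limit. So from now on assume either $S$ is closed-free of boundary with $|K(g)|$ bounded below by $-1$, or $\partial S$ is geodesic and $g$ doubles smoothly. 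Since $\pi_1(S)$ is nonabelian and $S$ is orientable, $S$ carries a hyperbolic metric $g_0$; doubling and using uniqueness of the hyperbolic metric in a conformal class (the involution is an isometry of the hyperbolic metric too), $g_0$ can be taken with geodesic boundary, and the Schwarz lemma (\textsection~\ref{sec:schwarz}) gives $\bar g\geq\bar g_0$, hence $g\geq g_0$.

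Next I would replace $\g$ and $\d$ by their $g_0$-geodesic representatives $\g_0,\d_0$ in the same free homotopy classes. Because $\g$ and $\d$ are simple closed $g$-geodesics meeting in exactly one point, their classes have geometric intersection number one, so $\g_0$ and $\d_0$ are simple, meet once, and — being a one-intersection pair of simple closed curves — they fill a one-holed torus (or one-holed Klein bottle; but orientability forces the one-holed torus) embedded in $S$. The key monotonicity input is $g\geq g_0$, which yields $\ell_g(\g)\geq\ell_{g_0}(\g_0)$ and $\ell_g(\d)\geq\ell_{g_0}(\d_0)$: indeed the $g$-length of \emph{any} loop in the class of $\g$ is at least its $g_0$-length, in particular at least $\ell_{g_0}(\g_0)$, and similarly for $\d$. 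Since $\sinh$ is increasing on $\R_+$, it therefore suffices to prove the inequality for the hyperbolic metric $g_0$, i.e. to prove the purely hyperbolic statement:
\begin{equation*}
\sinh\!\left(\frac{\ell_{g_0}(\g_0)}{2}\right)\sinh\!\left(\frac{\ell_{g_0}(\d_0)}{2}\right)\geq 1
\end{equation*}
for two simple closed geodesics meeting once on a hyperbolic surface with geodesic boundary.

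Finally I would prove the hyperbolic statement. A pair of simple closed geodesics meeting in one point lies in an embedded one-holed torus $T$ whose interior is a subsurface with geodesic (or cusped) boundary; the boundary curve of $T$ is freely homotopic to the commutator $[\g_0,\d_0]$, and $T$'s hyperbolic structure is determined by $(\ell_{g_0}(\g_0),\ell_{g_0}(\d_0))$ together with the twist. Working in the universal cover $\Hyp^2$, I would choose hyperbolic translations $A,B$ along lifts of $\g_0,\d_0$ realizing the crossing, and compute the trace of the commutator: the classical one-holed torus trace identity gives
\begin{equation*}
\operatorname{tr}[A,B]=\operatorname{tr}^2 A+\operatorname{tr}^2 B+\operatorname{tr}^2(AB)-\operatorname{tr}A\,\operatorname{tr}B\,\operatorname{tr}(AB)-2 .
\end{equation*}
The condition that $T$ be a genuine (non-degenerate, non-elementary) hyperbolic one-holed torus forces $[A,B]$ to be hyperbolic (or parabolic in the cusped limit), i.e. $\operatorname{tr}[A,B]\le -2$; rewriting traces as $2\cosh(\ell/2)$ and using that for a one-intersection pair the relevant configuration minimizes $|\operatorname{tr}(AB)|$ when the axes are orthogonal, this trace inequality reduces precisely to $\sinh(\ell_{g_0}(\g_0)/2)\sinh(\ell_{g_0}(\d_0)/2)\ge 1$ (with equality iff the boundary degenerates to a cusp and the axes cross perpendicularly — the thrice-punctured sphere case). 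The main obstacle is this last step: extracting the clean inequality from the trace identity requires controlling the twist parameter (the angle between the axes) and checking that the worst case is the right-angled one; this is a short but genuine hyperbolic-trigonometry computation, essentially the collar-type estimate for a one-holed torus, and is the only place where real work beyond the Schwarz-lemma formalism is needed.
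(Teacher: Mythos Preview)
Your approach is correct and follows the same skeleton as the paper: reduce to the boundaryless case via Lemme~\ref{lem:technique}, take the conformal hyperbolic metric $g_0$, use the Schwarz lemma to get $g\geq g_0$ and hence $\ell_g(\g)\geq \ell_{g_0}(\g_0)$, $\ell_g(\d)\geq \ell_{g_0}(\d_0)$, and then invoke the hyperbolic inequality.

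The one genuine difference is the final step. The paper simply cites the hyperbolic collar lemma (Buser, \textsection~4): the collar of width $w$ around $\g_0$ with $\sinh(w)\sinh(\ell_{g_0}(\g_0)/2)=1$ is embedded, and since $\d_0$ crosses it once we get $\ell_{g_0}(\d_0)\geq 2w$, which is exactly the inequality. You instead propose to extract it from the commutator trace identity on the one-holed torus. That works too: writing $x=\operatorname{tr}A$, $y=\operatorname{tr}B$, $z=\operatorname{tr}(AB)$, the condition $\operatorname{tr}[A,B]\leq -2$ reads $z^2 - xyz + x^2 + y^2 \leq 0$, whose solvability in $z$ forces the discriminant $x^2y^2 - 4(x^2+y^2)\geq 0$, which after substituting $x=2\cosh(\ell(\g_0)/2)$, $y=2\cosh(\ell(\d_0)/2)$ is precisely $\sinh(\ell(\g_0)/2)\sinh(\ell(\d_0)/2)\geq 1$. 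Note, however, that this is a discriminant argument rather than a minimization over the twist: you do not need to locate the ``worst case'' angle, only to observe that a real $z$ exists. Your phrasing about minimizing $|\operatorname{tr}(AB)|$ over configurations is slightly misleading on this point, though the computation you would actually carry out is the right one. The paper's citation of the collar lemma is quicker and is in fact the point of the lemma, as the paragraph following it makes explicit.
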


\begin{proof}
Par le lemme~\ref{lem:technique}, nous nous ramenons au cas où $S$ est sans bord.
Comme $\g$ et $\d$ s'intersectent en exactement un point, elles sont non contractiles. Soit $g_0$ une métrique hyperbolique sur $S$ conforme à $g$. Il existe des $g_0$-géodésiques fermées simples $\g_0$ et $\delta_0$ isotopes à $\g$ et $\d$. Elles s'intersectent en un point, et par le lemme du collier (voir \cite{buser} \textsection~4), nous avons 
$\sinh\left(\ell_{g_0}(\g_0)/2\right)\sinh\left(\ell_{g_0}(\d_0)/2\right)\geq 1.$
Nous concluons grâce au lemme de Schwarz (voir \textsection~\ref{sec:schwarz}).
\end{proof}

 Le lemme ci-dessus peut être vu comme une conséquence du \emph{lemme du collier en courbure variable} de Buser (théorème~4.3.2 de \cite{buser}). Nous pouvons aussi prouver le théorème de Buser \emph{via} le lemme de Schwarz et le lemme~\ref{lem:technique}, sans recourir à des généralisations des identités trigonométriques (\cite{buser} \textsection~2.5).


\bibliographystyle{alpha}
\bibliography{biblio}


\end{document}